\newtheorem{definition}{Definition}
\newtheorem{theorem}{Theorem}
\newtheorem{corollary}[theorem]{Corollary}
\newtheorem{lemma}[theorem]{Lemma}
\newtheorem{proposition}[theorem]{Proposition}
\newtheorem{claim}[theorem]{Claim}
\newtheorem{conjecture}[theorem]{Conjecture}
\DeclareMathOperator{\sat}{sat*}
\begin{document}
\newcommand{\Addresses}{{
\bigskip
\footnotesize
\medskip

Paul~Bastide, \textsc{LaBRI, Université de Bordeaux, Bordeaux, France}\par\nopagebreak\textit{Email address: }\texttt{paul.bastide@ens-rennes.fr}

\medskip

Carla~Groenland, \textsc{Institute of Applied Mathematics, Technische Universiteit Delft (TU Delft), 2628 CD Delft, Netherlands. }\par\nopagebreak\textit{Email address: }\texttt{c.e.groenland@tudelft.nl}

\medskip

Maria-Romina~Ivan, \textsc{Magdalene College, University of Cambridge, Cambridge, CB3 0AG, UK and Department of Pure Mathematics and Mathematical Statistics, Centre for Mathematical Sciences, Wilberforce Road, Cambridge, CB3 0WB, UK.}\par\nopagebreak\textit{Email address: }\texttt{mri25@dpmms.cam.ac.uk}

\medskip

Tom~Johnston, \textsc{School of Mathematics, University of Bristol, Bristol,      BS8 1UG, UK and Heilbronn Institute for Mathematical Research, Bristol, UK.}\par\nopagebreak\textit{Email address: }\texttt{tom.johnston@bristol.ac.uk}}}
\pagestyle{fancy}
\fancyhf{}
\fancyhead [LE, RO] {\thepage}
\fancyhead [CE] {PAUL BASTIDE, CARLA GROENLAND, MARIA-ROMINA IVAN AND TOM JOHNSTON}
\fancyhead [CO] {A POLYNOMIAL UPPER BOUND FOR POSET SATURATION}
\renewcommand{\headrulewidth}{0pt}
\renewcommand{\l}{\rule{6em}{1pt}\ }
\title{\Large\textbf{A POLYNOMIAL UPPER BOUND FOR POSET SATURATION}}
\author{PAUL BASTIDE, CARLA GROENLAND,\\ MARIA-ROMINA IVAN AND TOM JOHNSTON}
\date{}
\maketitle
\newcommand{\floor}[1]{\left\lfloor #1 \right\rfloor}
\newcommand{\ceil}[1]{\left\lceil #1 \right\rceil}
\newcommand{\A}{\mathcal{A}}
\newcommand{\B}{\mathcal{B}}
\newcommand{\F}{\mathcal{F}}
\newcommand{\C}{\mathcal{C}}
\newcommand{\I}{\mathcal{I}}
\newcommand{\Q}{\mathcal{Q}}
\newcommand{\T}{\mathcal{T}}
\renewcommand{\S}{\mathcal{S}}
\newcommand{\R}{\mathcal{R}}
\newcommand{\D}{\mathcal{D}}
\newcommand{\X}{\mathcal{X}}
\renewcommand{\P}{\mathcal{P}}

\newcommand{\tom}[1]{\textcolor{red}{#1}}
\newcommand{\carla}[1]{\textcolor{blue}{#1}}
\newcommand{\maria}[1]{\textcolor{purple}{#1}}
\newcommand{\paul}[1]{\textcolor{orange}{[#1]}}

\begin{abstract}
Given a finite poset $\mathcal P$, we say that a family $\mathcal F$ of subsets of $[n]$ is $\mathcal P$-saturated if $\mathcal F$ does not contain an induced copy of $\mathcal P$, but adding any other set to $\mathcal F$ creates an induced copy of $\mathcal P$. The induced saturation number of $\mathcal P$, denoted by $\text{sat}^*(n,\mathcal P)$, is the size of the smallest $\mathcal P$-saturated family with ground set $[n]$. In this paper we prove that the saturation number for any given poset grows at worst polynomially. More precisely, we show that $\text{sat}^*(n, \mathcal P)=O(n^c)$, where $c\leq|\mathcal{P}|^2/4+1$ is a constant depending on $\mathcal P$ only. We obtain this result by bounding the VC-dimension of our family.
\end{abstract}

\section{Introduction}
\par We say that a poset $(\mathcal Q,\preceq)$ contains an \textit{induced copy} of a poset $(\mathcal P,\preceq')$ if there exists an injective order-preserving function $f:\mathcal P\rightarrow\mathcal Q$ such that $(f(\mathcal P),\preceq)$ is isomorphic to $(\mathcal P,\preceq')$. We denote by $2^{[n]}$ the power set of $[n]=\lbrace 1,2,\dots,n\rbrace$. We define the \textit{$n$-hypercube}, denoted by $Q_n$ to be the poset formed by equipping $2^{[n]}$ with the partial order induced by inclusion.
\par If $\mathcal P$ is a finite poset and $\mathcal F$ is a family of subsets of $[n]$, we say that $\mathcal F$ is $\mathcal P$-\textit{saturated} if $\mathcal F$ does not contain an induced copy of $\mathcal P$, and for any $S\notin\mathcal F$, the family $\mathcal F\cup S$ contains an induced copy of $\mathcal P$. The smallest size of a $\mathcal P$-saturated family of subsets of $[n]$ is called the \textit{induced saturated number}, and denoted by $\text{sat}^*(n,\mathcal P)$.
\par It has been shown that the growth of $\text{sat}^*(n,\mathcal P)$ has a dichotomy. Keszegh, Lemons, Martin, P{\'a}lv{\"o}lgyi and Patk{\'o}s \cite{keszegh2021induced} proved that for any poset the induced saturated number is either bounded or at least $\log_2(n)$.  They also conjectured that in fact sat$^*(n,\mathcal P)$ is either bounded, or at least $n+1$. Recently, Freschi, Piga, Sharifzadeh and Treglown \cite{freschi2022induced} improved this result by replacing $\log_2 (n)$ with $2\sqrt{n-2}$. There is no known poset $\mathcal P$ for which $\text{sat}^*(n,\mathcal P)=\omega(n)$, and it is in fact believed that for any poset, the saturation number is either constant or grows linearly.
\par Whilst, as summarised above, some general lower bounds have been established, no non-trivial general upper bounds have yet been found. Given a general poset $\mathcal P$, what can we say about upper bounds on its saturation number? How fast can it grow? Is it possible to have an intricate partial relation that forces the saturation number to grow faster than any polynomial? The aim of this paper is to show that the answer is no: the saturation numbers have at worst polynomial growth. Our main result is the following.
\begin{theorem}
Let $\mathcal P$ be a finite poset, and let $|\mathcal P|$ denote the size of the poset. Then $\textnormal{sat}^*(n,\mathcal P)=O(n^c)$, where $c\leq|\mathcal P|^2/4+1$ is a constant depending on $\mathcal P$ only.
\label{thm:main}
\end{theorem}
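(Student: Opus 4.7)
The approach rests on the Sauer--Shelah lemma: any family $\mathcal G \subseteq 2^{[n]}$ of VC-dimension at most $d$ has $|\mathcal G| \leq \sum_{i=0}^d \binom{n}{i} = O(n^d)$. Hence Theorem~\ref{thm:main} reduces to exhibiting, for every $n$, a $\mathcal P$-saturated family $\mathcal F \subseteq 2^{[n]}$ whose VC-dimension is at most $|\mathcal P|^2/4 + 1$.

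The first step is to produce such an $\mathcal F$ explicitly. My plan is to begin from a small, structured ``seed'' collection -- simple enough to have small VC-dimension by design and tailored to avoid any induced copy of $\mathcal P$ (for instance, a collection built from a controlled set of ``coordinates'' related to a down-set embedding of $\mathcal P$ into a low-dimensional hypercube) -- and then greedily extend it to a maximal induced-$\mathcal P$-free family. The delicate design choice is to pick the seed and the greedy rule so that saturation is achieved while the VC-dimension bound survives each greedy addition; one expects the structure of the seed to ``propagate'' through the greedy phase.

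The heart of the argument is the VC-dimension bound on $\mathcal F$. Assume for contradiction that $\mathcal F$ shatters some $T \subseteq [n]$ with $|T| = \lfloor |\mathcal P|^2/4 \rfloor + 2$, witnessed by sets $\{F_S : S \subseteq T\} \subseteq \mathcal F$ with $F_S \cap T = S$. The map $F_S \mapsto S$ is order-preserving, so inclusions \emph{on} $T$ are controlled, but inclusions on $[n]\setminus T$ are not. From the $2^{|T|}$ witnesses I would extract, via a Dilworth / Erd\H{o}s--Szekeres style argument (the threshold $\lfloor|\mathcal P|^2/4\rfloor+1$ being exactly the Erd\H{o}s--Szekeres bound for a monotone subsequence of length $\lceil|\mathcal P|/2\rceil+1$), a subfamily on which the off-$T$ behaviour is trivialised. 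Inside such a ``clean'' subfamily one can realise an induced copy of $\mathcal P$ using the down-set embedding of $\mathcal P$ into $Q_{|\mathcal P|}$, contradicting induced-$\mathcal P$-freeness of $\mathcal F$.

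The main obstacle is reconciling the two conflicting demands on $\mathcal F$: it must be rich enough to be saturated -- adding any set creates an induced copy of $\mathcal P$ -- yet structured enough to have bounded VC-dimension. Shattering only controls the projection $F_S \cap T$, whereas an induced copy of $\mathcal P$ requires \emph{exact} inclusions and non-inclusions between the chosen witnesses; the technical crux is therefore showing how the extra $\Theta(|\mathcal P|^2)$ dimensions in the shattered set can be ``spent'' to force these exact relations, which is presumably where the precise $|\mathcal P|^2/4$ term in the exponent originates rather than the naive $|\mathcal P|$.
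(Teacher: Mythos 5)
Your high-level strategy is right: bound the VC-dimension of a suitable saturated family and apply Sauer--Shelah, and the idea of starting from a structured seed and extending greedily is also in the right direction. But the core of your VC-dimension argument has a genuine gap, and the mechanism you sketch is not the one that actually makes the proof go through.

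The paper's seed is \emph{all} subsets of $[n]$ of size less than the ``cube-height'' $h^*(\mathcal P)$, which is the least $h$ such that $\mathcal P$ embeds in the first $h+1$ layers of some $Q_m$. The relevant quantity is then the ``cube-width'' $w^*(\mathcal P)$: the smallest $m$ such that $\mathcal P$ embeds in $\binom{[m]}{\leq h^*(\mathcal P)}$. The VC bound is exactly $d < w^*(\mathcal P)$, and the argument is structural rather than Ramsey-theoretic. If $\mathcal F$ shatters $S$ with $|S| = w^*(\mathcal P)$, the trace $\{A\cap S : A\in\mathcal F\}$ is all of $Q_{w^*}$, so a copy of $\mathcal P$ sits in its first $h^*+1$ layers. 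The key observation, which your proposal does not capture, is that every element of this copy of size strictly less than $h^*$ is a subset of $[n]$ of small size, hence \emph{already a member} of the seed $\mathcal F_0\subseteq\mathcal F$ as an actual set, not just as a trace. Only the maximal elements of the copy (size exactly $h^*$) need to be lifted to genuine witnesses $A_i\in\mathcal F$ with $A_i\cap S = M_i$, and it is then elementary to check that the lifted family together with the small elements forms an induced copy of $\mathcal P$ inside $\mathcal F$. No Erd\H{o}s--Szekeres or Dilworth cleanup of off-$S$ behaviour is needed; the design of the seed renders it unnecessary.

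Your proposed extraction of a ``clean'' subfamily from the $2^{|T|}$ witnesses via Erd\H{o}s--Szekeres is not worked out, and it is unclear what sequence you would apply it to: the number of witnesses is exponential in $|T|$, not comparable to $|\mathcal P|^2/4$, and off-$T$ comparability among witnesses is not a linear order. You have also attributed the $|\mathcal P|^2/4$ threshold to the wrong phenomenon. In the paper it arises as a bound on $w^*(\mathcal P)$, proved by showing that any optimal cube-height embedding $A_1,\dots,A_k$ in $Q_{w^*}$ must, for every coordinate $a$, contain a pair with $A_i\setminus A_j=\{a\}$ (otherwise $a$ could be deleted, contradicting minimality of $w^*$), and then invoking the separation lemma of Freschi, Piga, Sharifzadeh and Treglown, which gives $|\mathcal P| = k \geq 2\sqrt{w^*-2}$. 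The numerical coincidence with the Erd\H{o}s--Szekeres bound is a red herring. Without the cube-height/cube-width decomposition and the ``full lower layers'' seed, the argument as you sketch it does not close.
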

\par Induced and non-induced poset saturation numbers are a growing area of study in combinatorics. Saturation for posets was introduced by Gerbner, Keszegh, Lemons, Palmer, P{\'a}lv{\"o}lgyi and Patk{\'o}s~\cite{gerbner2013saturating}, although this was not for \textit{induced} saturation. Induced poset saturation was first introduced in 2017 by Ferrara, Kay, Kramer, Martin, Reiniger, Smith and Sullivan \cite{ferrara2017saturation}. We briefly summarise some of the recent developments below, and also refer the reader to the textbook of Gerbner and Patk{\'o}s \cite{gerbner2019extremal} for a nice introduction to the area.

\begin{figure}
\centering
\begin{subfigure}{0.4\textwidth}
\centering
\begin{tikzpicture}
        \node (A) at (-7,0) {\textbullet};
        \node (B) at (-7,2) {\textbullet};
        \node (C) at (-5,0) {\textbullet};
        \node (D) at (-5,2) {\textbullet};
        \draw (A)--(B)--(C)--(D)--(A); 
\end{tikzpicture}
\caption{}
\label{fig:butterfly}
\end{subfigure}%
\begin{subfigure}{0.4\textwidth}
\centering
\begin{tikzpicture}
    \node (top) at (-1,2) {$\bullet$}; 
    \node (left) at (-2,1) {$\bullet$};
    \node (right) at (0,1) {$\bullet$};
    \node (bottom) at (-1,0) {$\bullet$};
    \draw (top)--(right)--(bottom)--(left)--(top);
\end{tikzpicture}
\caption{}
\label{fig:diamond}
\end{subfigure}
\caption{The butterfly poset $\mathcal{B}$ and the diamond poset $\mathcal{D}_2$.}
\end{figure}
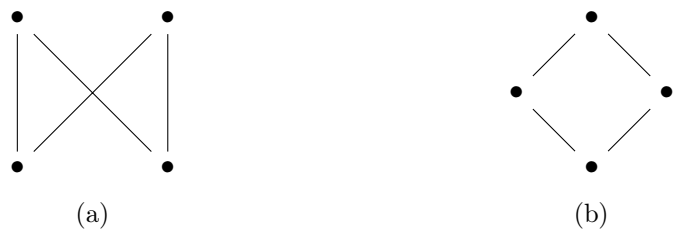
\par Determining the saturation number, even for small posets, has proven to be a difficult question. The \emph{exact} saturation number is known for only a precious few posets such as the $X$ and $Y$ posets~\cite{freschi2022induced}, chains with at most 6 sets \cite{morrison2014saturated}, and the fork \cite{ferrara2017saturation}. The only class of large posets for which exact saturation numbers are known are the $k$-antichains, denoted by $\mathcal{A}_k$.  It is easy to see that a collection of $k-1$ full chains (chains of order $n+1$) that intersect only at $\emptyset$ and $[n]$ form a $k$-antichain saturated family. Thus, for $n$ large enough, we certainly have $\text{sat}^*(n, \mathcal A_{k})\leq (k-1)(n-1)+2$. In the other direction, Martin, Smith and Walker \cite{martin2020improved} showed that for $k\geq4$ and $n$ large enough $\text{sat}^*(n,\mathcal A_{k})\geq \left(1-\frac{1}{\log_2(k-1)}\right)\frac{(k-1)n}{\log_2(k-1)}$. Recently, Bastide, Groenland, Jacob and Johnston \cite{bastide2022exact} showed that $\text{sat}^*(n,\mathcal A_k)=(k-1)n-\Theta(k\log k)$, and gave the exact value for $n$ sufficiently large compared to $k$. 
\par Other posets that have received special attention are the butterfly (Figure \ref{fig:butterfly}), which we denote by $\mathcal B$, and the diamond (Figure \ref{fig:diamond}), which we denote by $\mathcal D_2$. The butterfly poset is at least known to be linear, but the upper and lower bounds differ by a constant factor. Indeed, the best known lower bound is $\sat(n, \mathcal{B}) \geq n +1$ as shown by Ivan in \cite{ivan2020saturation}, while the best upper bound is currently $\sat(n, \mathcal{B}) \leq 6n - 10$, as shown by Keszegh, Lemons, Martin, P{\'a}lv{\"o}lgyi and Patk{\'o}s in \cite{keszegh2021induced}. Even less is known about the diamond. Martin, Smith and Walker \cite{martin2020improved} proved that $\sqrt n\leq\text{sat}^*(n, \mathcal D_2)\leq n+1$. The lower bound was later improved by Ivan \cite{ivan2022minimal} and now stands at $\text{sat}^*(n, \mathcal D_2) \geq (2\sqrt2-o(1))\sqrt n$. Despite the simple structure of the diamond, whether its saturation number is linear is still unknown.

\medskip
\medskip

\par The proof of Theorem \ref{thm:main} uses the following two new key notions, `cube-height' and `cube-width'. For a poset $\mathcal{P}$, the `cube-height' is the least $k$ such that, for some $n$, we can embed $\mathcal P$ into the first $k+1$ layers of $Q_n$, while the `cube-width' is the smallest $n$ that makes such a `small height' embedding possible. We give formal definitions and bounds on these two notions in Section 2. The cube-height and cube-width are designed to build a $\mathcal P$-saturated family with bounded VC-dimension. This is done in Section 3, where we prove Theorem \ref{thm:main}.
\par Our construction could be viewed as the result of a greedy algorithm where the sets are ordered according to size (and then arbitrarily within the layers), and an element is added to the family as long as it does not create a copy of $\P$ in the family. Greedy algorithms have been used before for studying poset saturation; most notably, a greedy colex algorithm was used to show a linear upper bound for the butterfly \cite{keszegh2021induced}. 
Our result shows that `layer-by-layer' greedy algorithms result in a saturated family of size at most $n^{|\P|^2/4 + 1}$, and we note that such an algorithm has a near-linear time complexity of $O_\P(|Q_n| (\log_2|Q_n|)^{|\P|^3})=O_\P(2^n n^{|\P|^3})$. This follows from the fact that for any family $\F$, it can be decided if it is $\P$-free in $O_\P(|\F|^{|\P|})$ time.
\par We end the introduction by reminding the reader about the VC-dimension of a family of sets. We say that a family $\F$ of subsets of $[n]$ \emph{shatters} a set $S \subseteq [n]$ if, for all $F\subseteq S$, there exists $A\in\F$ such that $A\cap S=F$. In other words, $\{A\cap S:A\in \F\}$ is the power set of $S$. The \emph{VC-dimension} of $\F$ is the largest cardinality of a set shattered by $\F$. The size of a family $\mathcal F$ with bounded VC-dimension grows at worst polynomially, as shown by the following well-known result.
\begin{lemma}[Sauer-Shelah lemma \cite{sauer1972density,shelah1972combinatorial}] If $\F\subseteq 2^{[n]}$ has VC-dimension $d$, then $|\F|\leq\displaystyle\sum_{i=0}^d \binom{n}{i}$.
\label{lem:ss}
\end{lemma}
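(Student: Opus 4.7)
The plan is to prove the bound by induction on $n$, with the base case $n=0$ (where $\F \subseteq \{\emptyset\}$ has size at most $1 = \binom{0}{0}$) being immediate. For the inductive step, I would fix a distinguished element, say $n \in [n]$, and partition the information in $\F$ into two derived families on the smaller ground set $[n-1]$. Concretely, let
\[
\F_0 = \{\,A \setminus \{n\} : A \in \F\,\} \subseteq 2^{[n-1]},
\qquad
\F_1 = \{\,A \setminus \{n\} : A \in \F \text{ and } A \triangle \{n\} \in \F\,\} \subseteq 2^{[n-1]}.
\]
The first family records the ``projection'' of $\F$ away from $n$, while the second records those projections for which both versions (with and without $n$) appear in $\F$. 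A straightforward counting argument gives $|\F| = |\F_0| + |\F_1|$, since each $B \in \F_0$ is contributed either by exactly one set of $\F$ (counted only in $\F_0$) or by two sets of $\F$ (counted once in each).

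The crux of the argument is the interaction between shattering and this split. For $\F_0$, any set $S \subseteq [n-1]$ shattered by $\F_0$ is also shattered by $\F$, so the VC-dimension of $\F_0$ is at most $d$. For $\F_1$, the key observation is that if $S \subseteq [n-1]$ is shattered by $\F_1$, then $S \cup \{n\}$ is shattered by $\F$: each subset $T \subseteq S$ lifts to two witnesses in $\F$ (one containing $n$, one not), covering all $2^{|S|+1}$ subsets of $S \cup \{n\}$. Hence the VC-dimension of $\F_1$ is at most $d-1$.

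Applying the induction hypothesis to $\F_0$ and $\F_1$, both over ground set $[n-1]$, yields
\[
|\F| \leq \sum_{i=0}^{d} \binom{n-1}{i} + \sum_{i=0}^{d-1} \binom{n-1}{i}.
\]
The proof then concludes via Pascal's identity $\binom{n-1}{i} + \binom{n-1}{i-1} = \binom{n}{i}$ after reindexing the second sum, which telescopes the right-hand side to $\sum_{i=0}^{d} \binom{n}{i}$. The only subtle point in the whole argument is keeping the shattering transfer for $\F_1$ straight: one must check that for each subset $T \subseteq S$ one really gets witnesses in $\F$ achieving both $T$ and $T \cup \{n\}$ upon intersection with $S \cup \{n\}$, and this is precisely what the definition of $\F_1$ (requiring both $A$ and its toggle to lie in $\F$) guarantees. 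No deeper tool is needed.
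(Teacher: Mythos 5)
Your proof is correct; it is the standard inductive proof of the Sauer--Shelah lemma via the decomposition $|\F| = |\F_0| + |\F_1|$, with the two VC-dimension transfers you identify (dimension $\leq d$ for $\F_0$ and $\leq d-1$ for $\F_1$) being exactly the right observations, and the Pascal-identity reindexing closes the induction cleanly. However, there is nothing in the paper to compare this against: the paper states Lemma~\ref{lem:ss} as a known result and simply cites Sauer and Shelah rather than proving it, so you have supplied a proof where the paper supplies a citation. Your argument is the classical one and would be a fine self-contained substitute for the citation.
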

\section{Cube-height and cube-width}
\par In this section we discuss how to `fit' a given poset $\mathcal P$ into a hypercube. We do this with the help of cube-height and cube-width, the two new quantities mentioned above, which we bound in terms of $|\mathcal{P}|$.  Given two integers $h\leq w$, we denote by $\binom{[w]}{\leq h}$ the induced subposet of the hypercube $Q_w$ consisting  of all the sets of size at most $h$, i.e the poset $Q_w$ restricted to the first $h + 1$ layers, $0,1,\dots,h$. 
\begin{definition} 
For a poset $\mathcal P$, we define the \textit{cube-height} $h^*(\mathcal P)$ to be the minimum $h^* \in \mathbb{N}$ for which there exists $n \in \mathbb{N}$ such that $\binom{[n]}{\leq h^*}$ contains an induced copy of $\mathcal{P}$.
\end{definition}
\begin{definition} For a poset $\mathcal P$, we define the \textit{cube-width} $w^*(\mathcal P)$ to be the minimum $w^* \in \mathbb{N}$ such that there exists an induced copy of $\mathcal P$ in $\binom{[w^*]}{\leq h^*(\mathcal P)}$.\end{definition}
\par We stress that the two notions defined above are different from the usual height and width of $\mathcal P$, that is, from the size of the biggest chain and antichain, respectively. It is easy to see that the height of $\mathcal P$ is always at most $h^*(\mathcal P)+1$, and that equality can happen (e.g. for a chain), but that is not always the case. Indeed, if $\mathcal P$ is the butterfly poset (Figure \ref{fig:butterfly}), then the height of $\mathcal P$ is 2 and its cube-height is 3: in any hypercube, the first 3 layers are butterfly-free.
\par Similarly, the width and the cube-width can be very different. For example, if $\mathcal P$ is a chain of size $k$, then its width is 1, but its cube-width is $k-1$. Cube-width is not even a monotone property. For example, the antichain of size $\binom{k}{k/2}$ has cube-height 1 and cube-width $\binom{k}{k/2}$, but adding a chain of length $k/2$ which is less than all elements of the antichain gives a poset with cube-height $k/2$ and cube-width $k$.
\par It is important to remark that the cube-width is \textit{not} the minimal $n$ for which the poset can be embedded in $Q_n$. Indeed, the cube-width of an antichain of size 20 is 20, but $Q_6$ contains an antichain of size 20, namely the middle layer.
\medskip

\par We now bound the cube-height and cube-width in terms of the size of the poset.
\begin{lemma}
For any poset $\mathcal P$, we have that $h^*(\mathcal P)\leq |\mathcal P|-1$.
\label{lem:height}
\end{lemma}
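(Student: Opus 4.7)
The plan is to construct, for an arbitrary finite poset $\mathcal{P}$ of size $k := |\mathcal{P}|$, an explicit induced embedding into $\binom{[k]}{\leq k-1}$, where I identify the ground set $[k]$ with $\mathcal{P}$ itself. Writing $U(p) = \{q \in \mathcal{P} : p \preceq q\}$ for the principal upset of $p$, I would set
\[
f(p) = \mathcal{P} \setminus U(p) \qquad \text{for each } p \in \mathcal{P}.
\]
Since reflexivity gives $p \in U(p)$ and hence $|U(p)| \geq 1$, every image satisfies $|f(p)| \leq k - 1$, landing inside the first $k$ layers of $Q_k$. Provided $f$ is an induced embedding, this at once yields $h^*(\mathcal{P}) \leq k - 1$.

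The verification is routine. Order preservation reduces to the chain of equivalences
\[
p_1 \preceq p_2 \iff U(p_2) \subseteq U(p_1) \iff f(p_1) \subseteq f(p_2),
\]
where the forward direction of the first equivalence is transitivity, and the reverse direction comes from applying the inclusion to the element $p_2 \in U(p_2)$. For injectivity, if $f(p_1) = f(p_2)$ then $U(p_1) = U(p_2)$, which forces $p_1 \preceq p_2$ and $p_2 \preceq p_1$ (taking each $p_i$ as a witness), and antisymmetry gives $p_1 = p_2$.

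The only conceptual observation is that the naive downset embedding $p \mapsto \{q : q \preceq p\}$ does not suffice on its own: whenever $\mathcal{P}$ has a maximum element $M$, the downset of $M$ equals all of $\mathcal{P}$, placing $M$ in layer $k$ and giving only $h^*(\mathcal{P}) \leq k$. Replacing downsets by upset-complements is a uniform fix that handles every poset simultaneously, because reflexivity always saves the single extra layer we need. I do not anticipate any serious obstacle beyond spotting this trick.
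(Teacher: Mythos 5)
Your proof is correct, and it takes a genuinely cleaner route than the paper's. The paper also starts from the ``downset'' map $p \mapsto \{q : q \preceq p\}$, notices exactly the obstruction you mention (a unique maximum lands in layer $k$), and fixes it by a case split plus induction on $|\mathcal{P}|$: if $\mathcal{P}$ has a unique maximal element, remove it, embed the remainder into $\binom{[k-1]}{\leq k-2}$ inductively, and adjoin $[k-1]$ as the top; otherwise every downset already has size at most $k-1$ and the direct map works. Your upset-complement map $f(p) = \mathcal{P}\setminus U(p)$ sidesteps both the case analysis and the induction in one stroke, since reflexivity guarantees $p \notin f(p)$ uniformly, and the biconditional $p_1 \preceq p_2 \iff f(p_1) \subseteq f(p_2)$ gives an induced embedding directly. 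The verification you give (forward by transitivity, backward by evaluating the inclusion $U(p_2)\subseteq U(p_1)$ at $p_2$, injectivity by antisymmetry) is complete. The only thing the paper's inductive version buys that yours does not is an explicit remark that the construction is algorithmic, but your map is at least as constructive, so nothing is lost. Both arguments also show the slightly stronger statement in the paper's proof, namely that $\mathcal{P}$ embeds into $\binom{[n]}{\leq k-1}$ for every $n\geq k$, since your image lies in $Q_k \subseteq Q_n$.
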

We remark that the inequality in this lemma is tight, since a chain on $k$ elements has cube-height $k-1$.
\begin{proof}[Proof of Lemma \ref{lem:height}] 
\par We prove that any poset $\mathcal{P}$ on $k$ elements embeds in $\binom{[n]}{\leq k-1}$ for all $n\geq k$ by induction on $k$.
\par The base case $k=1$ is trivially true since the cube-height of a poset with 1 element is $0$. Let $k\geq 2$ and assume the claim is true for all posets of size less than $k$.
\par Let $\mathcal P=(\{p_1,\dots,p_k\},\preceq)$, and suppose that $n\geq k$. We show that $\mathcal P$ appears as an induced poset in $\binom{[n]}{\leq k-1}$. 
\par Suppose first that $\mathcal{P}$ has a unique maximal element. After renumbering the elements as necessary, we may assume that $p_k$ is the unique maximal element of $\mathcal P$. In this case, using the induction hypothesis, we find sets $A_1,\dots, A_{k-1}\in \binom{[k-1]}{\leq k-2}$ such that they induce a copy of the poset $\mathcal{P}\setminus \{p_k\}$. Now let $A_k=[k-1]$ and observe that $A_1,\dots,A_k$ induce a copy of $\mathcal{P}$ in $\binom{[n]}{\leq k-1}$. 
Indeed, $A_i \subsetneq A_k$ for all $i\leq k-1$ since $A_k$ has size $k-1$, while $|A_i|\leq k-2$ for all $i\leq k$.
\par Suppose now that $\mathcal{P}$ does not contain a unique maximal element. We construct the sets $A_1,\dots,A_k$ as follows: for any $i,j \in [k]$, $i\in A_j$ if and only if $p_i\preceq p_j$. We observe that all constructed sets are subsets of $[k]\subseteq [n]$ and the size of each $A_i$ is the number elements less than or equal to $p_i$ (including $p_i$), which is at most $k-1$ since $\mathcal{P}$ has no unique maximal element. It remains to argue that $\{A_1,\dots, A_k\}$ induces a copy of $\mathcal P$ in $Q_n$.
\par If $p_i\not \preceq p_j$, then $A_i\not \subseteq A_j$ since $i\in A_i\setminus A_j$. On the other hand, if $p_i\preceq p_j$, then $\ell \in A_i$ implies $p_\ell\preceq p_i$, which implies $p_\ell\preceq p_j$ by transitivity. Therefore, $\ell \in A_j$, which shows that $A_i\subseteq A_j$, as required.
\end{proof}
\par Note that the proof above gives a simple algorithm for constructing an embedding.
\par In the lemma above, we embedded $\mathcal P$ into $Q_{|\mathcal P|}$. This cannot be improved in general, as seen in the following example.
Let $\mathcal P_t$ be the poset consisting of $t$ antichains $\mathcal A_1,\dots,\mathcal A_t$ of size $2$, where we further impose that any element of $\mathcal A_i$ is less than any element of $\mathcal A_j$ for all $i<j$. 
Now, $2t=|\mathcal P_t|$ and by induction on $t$ it follows that $\mathcal P_t$ does not embed into $Q_{2t-1}$. Indeed, since everything below one of these antichains is a subset of both of its elements, each $\mathcal A_i$ must use at least two new elements of the ground set. 
Thus, this poset can be embedded in $Q_{2t}$, but not in $Q_{2t-1}$.

\medskip
\par We say that a collection of sets $A_1,\dots,A_k\subseteq [n]$ forms an \emph{optimal cube-height embedding of $\mathcal P$} if they are pairwise distinct and induce a copy of $\mathcal{P}$ in $\binom{[n]}{\leq h^*(\mathcal P)}$.
By the definition of $h^*(\mathcal P)$ such an embedding exists, and its ground set is $A_1\cup \dots \cup A_k$, which has size at most $h^*(\mathcal{P})k$, thus we immediately get the following corollary.
\begin{corollary} 
\label{lem:width_easy}
For any poset $\mathcal{P}$, $w^*(\mathcal{P})\leq h^*(\mathcal{P})|\mathcal{P}|\leq |\mathcal{P}|^2$.
\end{corollary}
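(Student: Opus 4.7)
The plan is to do exactly what the remark just before the corollary suggests, and make the estimate rigorous. Fix a poset $\mathcal{P}$ with $|\mathcal{P}| = k$. By the definition of cube-height, there exists some $n \in \mathbb{N}$ and sets $A_1, \dots, A_k \subseteq [n]$, pairwise distinct, that induce a copy of $\mathcal{P}$ in $\binom{[n]}{\leq h^*(\mathcal{P})}$; that is, an optimal cube-height embedding exists. The ground set actually used by this embedding is $U = A_1 \cup \dots \cup A_k \subseteq [n]$, and the induced subposet structure of $\{A_1, \dots, A_k\}$ depends only on the containment relations among these sets, not on the ambient $n$. Thus the same family, viewed as subsets of $U$ (after relabelling the elements of $U$ by $\{1, \dots, |U|\}$), gives an induced copy of $\mathcal{P}$ inside $\binom{[|U|]}{\leq h^*(\mathcal{P})}$.

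Next I would bound $|U|$. Each $A_i$ has cardinality at most $h^*(\mathcal{P})$ by the choice of embedding, so
\[
|U| \;=\; |A_1 \cup \dots \cup A_k| \;\leq\; \sum_{i=1}^{k} |A_i| \;\leq\; h^*(\mathcal{P}) \cdot k \;=\; h^*(\mathcal{P}) \cdot |\mathcal{P}|.
\]
By the minimality in the definition of $w^*(\mathcal{P})$, this gives $w^*(\mathcal{P}) \leq |U| \leq h^*(\mathcal{P}) \cdot |\mathcal{P}|$, which is the first inequality. The second inequality follows immediately by invoking Lemma \ref{lem:height}, which states $h^*(\mathcal{P}) \leq |\mathcal{P}| - 1 \leq |\mathcal{P}|$, yielding $h^*(\mathcal{P}) \cdot |\mathcal{P}| \leq |\mathcal{P}|^2$.

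There is no real obstacle here: the only subtle point is checking that the embedding remains an induced copy of $\mathcal{P}$ after restricting the ground set to $U$, which is immediate since inclusion between subsets is preserved under passing to any subset of the ambient ground set that contains them. Everything else is a one-line union bound combined with the previous lemma.
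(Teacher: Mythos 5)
Your proof is correct and matches the paper's own argument: both take an optimal cube-height embedding, bound its ground set $U = A_1 \cup \cdots \cup A_k$ by $h^*(\mathcal{P}) \cdot |\mathcal{P}|$ via a union bound, restrict to $U$, and finish with Lemma~\ref{lem:height}. The only difference is that you spell out the restriction step explicitly, which the paper leaves implicit.
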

\par This corollary can immediately be strengthened by noting that we only need to take the union over the maximal elements in the embedding, so $w^*(\mathcal{P})$ is bounded by $h^*(\mathcal{P})$ times the number of maximal elements. Since the number of maximal elements is bounded by the size of the largest antichain in $\mathcal{P}$, denoted by $w(\mathcal P)$, this gives the following bound 
$$w^*(\mathcal{P})\leq h^*(\mathcal{P})w(\mathcal{P}).$$
\par Whilst Corollary \ref{lem:width_easy} is enough for us to prove that $\sat(n, \mathcal{P}) = O(n^{|\mathcal{P}|^2 - 1})$, proving Theorem~\ref{thm:main} requires a stronger bound on $w^*(\mathcal{P})$, which is given by the following lemma.

\begin{lemma}
\label{lem:width}
For any poset $\mathcal{P}$, we have that $w^*(\mathcal{P}) \leq |\mathcal{P}|^2/4 + 2$.
\end{lemma}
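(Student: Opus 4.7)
My plan is to improve the bound from Corollary~\ref{lem:width_easy}, namely $w^*(\mathcal{P}) \leq h^*(\mathcal{P}) \cdot m$, where $m$ is the number of maximal elements of $\mathcal{P}$. Write $k = |\mathcal{P}|$ and $h = h^*(\mathcal{P})$, and let $M \subseteq \mathcal{P}$ be the set of maximal elements, so $m = |M|$.

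First I would bound $h$ in terms of $m$. The downset embedding $p \mapsto \{q \in \mathcal{P} : q \leq p\}$ is always a valid induced embedding of $\mathcal{P}$ into $Q_k$ (with ground set $\mathcal{P}$ itself). For each maximal $p$, the downset $\{q : q \leq p\}$ cannot contain any of the other $m - 1$ maximal elements, since they are incomparable to $p$; hence $|\{q : q \leq p\}| \leq k - m + 1$, and so $h^*(\mathcal{P}) \leq k - m + 1$. Combined with the corollary, this gives $w^*(\mathcal{P}) \leq m(k - m + 1)$, whose maximum over $m \in \{1, \ldots, k\}$ is $\lfloor (k+1)^2/4 \rfloor$. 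This already suffices for small $k$ but exceeds $k^2/4 + 2$ in the balanced regime $m \approx k/2$, so a further refinement is needed there.

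To handle that regime, the plan is to refine the union bound $\bigl|\bigcup_{p \in M} A_p\bigr| \leq \sum_{p \in M} |A_p|$ by exploiting overlap. All of the maximal images share, as a common subset, $\bigcup_{q \in \mathcal{D}} A_q$, where $\mathcal{D} = \{q \in \mathcal{P} : q \leq p \text{ for every } p \in M\}$ is the set of elements below every maximal element. The idea is to embed $\mathcal{D}$ first inductively using at most $|\mathcal{D}|^2/4 + 2$ ground-set elements, and then extend to $\mathcal{P}$ by attaching each $A_p$ ($p \in M$) with at most $h - |\text{stem}|$ private new elements used to distinguish it from the other maximal sets and from non-maximal elements incomparable to $p$. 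After optimising over the stem size and the parameters $h, m, |\mathcal{D}|$, the total ground-set size should come in at most $k^2/4 + 2$. The main obstacle is the interface between the two regimes: when $|\mathcal{D}|$ is small, the stem is small and one must rely on the bound $h \leq k - m + 1$; when $|\mathcal{D}|$ is large, the inductive hypothesis on $\mathcal{D}$ carries the argument. Balancing these cases carefully, and controlling the additive constant $2$ through the induction, is the technical heart of the proof.
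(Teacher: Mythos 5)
Your proposal is incomplete and, as written, does not establish the lemma. The first step is fine: the downset embedding $p \mapsto \{q : q \leq p\}$ gives $h^*(\mathcal{P}) \leq k - m + 1$ where $m$ is the number of maximal elements, and combining with the strengthened version of Corollary~\ref{lem:width_easy} gives $w^*(\mathcal{P}) \leq m(k - m + 1)$. But by AM--GM this is only $\leq (k+1)^2/4 = k^2/4 + k/2 + 1/4$, which strictly exceeds $k^2/4 + 2$ for every $k \geq 5$. So the refinement you sketch in the second half is not an optional strengthening --- it is the entire proof --- and you have not carried it out. The stem idea faces real obstacles you don't address: the ``private'' elements used to separate the maximal images must also be managed against the non-maximal, non-stem elements of $\mathcal{P}$ (incomparability must be preserved in both directions); $\mathcal{D}$ is only one piece of $\mathcal{P}$ and the elements of $\mathcal{P}\setminus(\mathcal{D}\cup M)$ need ground-set coordinates too; and there is no argument that the bookkeeping of the additive constant $2$ survives the induction. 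Acknowledging that this is ``the technical heart'' is honest, but a proof needs the heart.

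The paper's argument is essentially disjoint from yours and considerably cleaner. Rather than constructing a small embedding and counting coordinates, it proves (Proposition~\ref{prop:property}) that \emph{any} copy of $\mathcal{P}$ sitting in $\binom{[w^*]}{\leq h^*}$ must separate every coordinate of $[w^*]$, i.e.\ for each $a \in [w^*]$ there are sets $A_i, A_j$ in the copy with $A_i \setminus A_j = \{a\}$. This is a short minimality argument: if some $a$ were not separated, one could delete $a$ from every set and still have an induced copy of $\mathcal{P}$ in the first $h^*+1$ layers of $Q_{w^*-1}$, contradicting the definition of $w^*$. Feeding this separation property into Lemma~2.2 of Freschi--Piga--Sharifzadeh--Treglown (Lemma~\ref{lemma:treglown}) gives $|\mathcal{P}| \geq 2\sqrt{w^* - 2}$ immediately, which rearranges to the stated bound. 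The contrast is instructive: your plan tries to build one good embedding and bound its ground set by casework, whereas the paper extracts a universal structural property of \emph{every} optimal embedding and then invokes a single inequality, with no case analysis at all.
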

\par In order to prove Lemma \ref{lem:width} we will make use of Lemma 2.2 from \cite{freschi2022induced}, which we state below for completeness.
\begin{lemma}[\cite{freschi2022induced}, Lemma 2.2]
\label{lemma:treglown}
Let $\mathcal{F} \subseteq 2^{[n]}$ be such that for every $i \in [n]$ there exist two elements $A, B \in \mathcal{F}$ such that $A \setminus B = \{i\}$. Then $|\mathcal{F}| \geq 2\sqrt{n - 2}$.
\end{lemma}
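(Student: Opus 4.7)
The plan is to show that any \emph{minimal} embedding of $\mathcal{P}$ into $\binom{[n]}{\leq h^*(\mathcal{P})}$ forces every coordinate of the ground set to be witnessed by a pair of sets whose symmetric difference is a singleton; then Lemma~\ref{lemma:treglown} applies directly and gives the quadratic bound. Concretely, I would fix sets $A_1,\dots,A_k \subseteq [n]$ (with $k = |\mathcal{P}|$) that form an induced copy of $\mathcal{P}$ in $\binom{[n]}{\leq h^*(\mathcal{P})}$ for $n = w^*(\mathcal{P})$. By the minimality of $n$ in the definition of $w^*(\mathcal{P})$, we may assume $A_1 \cup \cdots \cup A_k = [n]$, because otherwise relabelling the used coordinates would realise $\mathcal{P}$ in a strictly smaller ground set.

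The core claim to establish is: for every $i \in [n]$, there exist indices $j,\ell$ with $A_j \setminus A_\ell = \{i\}$. Suppose for contradiction that this fails for some $i$, and let $B_r = A_r \setminus \{i\}$ for every $r$. I would check three properties of the family $\{B_1,\dots,B_k\}$. First, distinctness: if $B_j = B_\ell$ with $j \neq \ell$ then $A_j \,\triangle\, A_\ell \subseteq \{i\}$, and since $A_j \neq A_\ell$ this forces either $A_j \setminus A_\ell = \{i\}$ or $A_\ell \setminus A_j = \{i\}$, contradicting the assumption on $i$. Second, preservation of inclusions: clearly $A_j \subseteq A_\ell$ implies $B_j \subseteq B_\ell$; conversely, if $A_j \not\subseteq A_\ell$, pick $m \in A_j \setminus A_\ell$, and if $m \neq i$ then already $m \in B_j \setminus B_\ell$, while if $m = i$ the assumption guarantees that $A_j \setminus A_\ell$ contains some further element $m' \neq i$, which survives in $B_j \setminus B_\ell$. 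Third, the sizes only shrink, so $|B_r| \leq h^*(\mathcal{P})$ for every $r$. Therefore $\{B_1,\dots,B_k\}$ induces a copy of $\mathcal{P}$ in $\binom{[n] \setminus \{i\}}{\leq h^*(\mathcal{P})}$, contradicting the minimality of $n$.

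Having established the claim, I would apply Lemma~\ref{lemma:treglown} to $\mathcal{F} = \{A_1,\dots,A_k\}$ to obtain $|\mathcal{P}| = k \geq 2\sqrt{n-2}$, which rearranges to $w^*(\mathcal{P}) = n \leq |\mathcal{P}|^2/4 + 2$. The only step that requires any care is the removal argument: one must verify simultaneously that distinctness survives, that no new comparabilities are introduced, and that no existing ones are destroyed, which is the content of checking the singleton-difference exclusion in both directions. Once the setup is phrased in terms of minimising the ground set (rather than the ambient $w^*$), the rest of the argument is essentially forced and Lemma~\ref{lemma:treglown} closes the gap.
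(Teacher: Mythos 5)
The statement you were given is Lemma~\ref{lemma:treglown}: any family $\mathcal{F} \subseteq 2^{[n]}$ such that every $i \in [n]$ arises as a singleton difference $A \setminus B$ of two members of $\mathcal{F}$ satisfies $|\mathcal{F}| \geq 2\sqrt{n-2}$. Your proposal does not prove this. What you actually prove is that any optimal cube-height embedding of $\mathcal{P}$ separates its ground set --- which is the paper's Proposition~\ref{prop:property} --- and then you \emph{invoke} Lemma~\ref{lemma:treglown} as a black box to derive the cube-width bound $w^*(\mathcal{P}) \leq |\mathcal{P}|^2/4 + 2$ --- which is the paper's Lemma~\ref{lem:width}. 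The lemma you were asked about therefore appears in your argument only as a hypothesis, never as a conclusion; this is a category error, not a proof.

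For context: the paper itself does not reprove Lemma~\ref{lemma:treglown}. It is imported verbatim from Freschi, Piga, Sharifzadeh and Treglown \cite{freschi2022induced}, so there is no in-paper proof to compare against. The argument you did write is correct and tracks the paper's proof of Proposition~\ref{prop:property} closely (delete the unwitnessed coordinate, verify that distinctness, inclusions and incomparabilities all survive, contradict the minimality of $w^*$), and your final rearrangement reproduces the proof of Lemma~\ref{lem:width}. But none of it engages the content of the target statement, which is a purely combinatorial lower bound about families of sets and has nothing to do with posets, embeddings, or $w^*$. A genuine proof would have to be self-contained: for instance, note that each ordered pair $(A,B) \in \mathcal{F} \times \mathcal{F}$ can witness at most one coordinate $i$ via $A \setminus B = \{i\}$, giving $|\mathcal{F}|(|\mathcal{F}|-1) \geq n$ and hence $|\mathcal{F}| \gtrsim \sqrt{n}$, and then one would need a sharper analysis of which pairs can simultaneously serve as witnesses in order to gain the extra factor and reach $2\sqrt{n-2}$.
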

\par The next proposition tells us that any optimal cube-height embedding has the property stated in Lemma \ref{lemma:treglown}. We remark that this property in itself may be of independent interest, as explained in the final section.
\begin{proposition}
\label{prop:property}
Let $\mathcal P$ be a poset and let $A_1,\dots,A_k\in \binom{[w^*(\mathcal{P})]}{\leq h^*(\mathcal{P})}$ be distinct sets that induce a copy of $\mathcal P$. Then for all $a\in [w^*(\P)]$, there exist $i,j\in [k]$ such that $A_i\setminus A_j =\{a\}$.
\end{proposition}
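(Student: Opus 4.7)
The plan is to argue by contradiction against the minimality built into the definition of $w^*(\mathcal{P})$. Suppose some $a \in [w^*(\mathcal{P})]$ satisfies $A_i \setminus A_j \neq \{a\}$ for all $i, j \in [k]$. I would then produce an induced copy of $\mathcal{P}$ whose ground set avoids $a$; after relabelling this gives an embedding into $\binom{[w^*(\mathcal{P}) - 1]}{\leq h^*(\mathcal{P})}$, contradicting the definition of $w^*(\mathcal{P})$.

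The natural construction is simply to delete $a$ wherever it appears. Partition $[k]$ into $I = \{i : a \in A_i\}$ and $J = \{j : a \notin A_j\}$, and define $A_i' = A_i \setminus \{a\}$ for $i \in I$ and $A_j' = A_j$ for $j \in J$; each resulting set has size at most $h^*(\mathcal{P})$ and avoids $a$. Two things then need checking. First, distinctness: the only risky case is $A_i' = A_j'$ with $i \in I$ and $j \in J$, which would force $A_i \setminus \{a\} = A_j$, i.e.\ $A_i \setminus A_j = \{a\}$, forbidden by hypothesis. Second, that $\{A_i'\}$ induces the same poset. Inclusions with both endpoints in $I$ or both in $J$ are immediately preserved; the content is in cross pairs. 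For $i \in I$ and $j \in J$, the inclusion $A_i \subseteq A_j$ is vacuously false (the left-hand side contains $a$), while $A_i' \subseteq A_j'$ is equivalent to $A_i \setminus A_j = \{a\}$, also false by hypothesis. In the other direction, $A_j \subseteq A_i$ is equivalent to $A_j \subseteq A_i'$ because $a \notin A_j$, so this direction is unchanged. Hence $\{A_i'\}$ induces a copy of $\mathcal{P}$ on ground set $[w^*(\mathcal{P})] \setminus \{a\}$, completing the contradiction.

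The only conceptually delicate point, and what I would flag as the ``main obstacle'' worth articulating, is noticing that the hypothesis ``$A_i \setminus A_j \neq \{a\}$'' is precisely the condition needed to keep $A_i' \not\subseteq A_j'$ for cross pairs after the deletion: if one tried a weaker hypothesis, cross-pair inclusions could spuriously appear in the modified family and corrupt the induced copy. In other words, the statement of the proposition is perfectly calibrated to the natural ``remove $a$'' reduction; once this matching is observed, the remaining verification is a short set-theoretic exercise and the contradiction with minimality of $w^*(\mathcal{P})$ is immediate.
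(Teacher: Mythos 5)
Your proposal is correct and takes essentially the same approach as the paper: both argue by contradiction against the minimality of $w^*(\mathcal{P})$ by deleting the coordinate $a$ from every set, then verify distinctness and preservation of comparability/incomparability using exactly the hypothesis that no difference equals $\{a\}$. Your explicit partition into $I$ and $J$ and case analysis on cross pairs is a slightly more granular presentation of the same verification the paper does.
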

\begin{proof}
Suppose there exists $a \in [w^*(\mathcal{P})]$ such that there does not exist $i,j \in [k]$ with $A_i\setminus A_j =\{a\}$. By relabelling as necessary, we may assume that $a = w^*(\mathcal{P})$, which we denote by $w^*$ for clarity. We now replace $A_i$ by $A_i \setminus \{w^*\}$ for all $i\leq k$. This new family lives in $\binom{[w^*-1]}{\leq h^*(\mathcal P)}$ and we claim it still forms a copy of $\mathcal{P}$. First, notice that we do not decrease the size of the family: for that to happen there would have to be distinct $A_i, A_j$ such that $A_j = A_i \cup \{w^*\}$, but that  would immediately imply $A_j \setminus A_i = \{w^*\}$, a contradiction. We are left to show that comparability and incomparability relations are preserved. Let $A_i, A_j$ be such that $A_i \subseteq A_j$. Then $A_i\setminus \{w^*\} \subseteq A_j \setminus \{w^*\}$, as required. Finally, let $A_i$ and $A_j$ be incomparable, and assume that $A_i\setminus \{w^*\} \subseteq A_j\setminus \{w^*\}$. This implies that $w^*\in A_i$ and $w^*\notin A_j$, and consequently $A_i\setminus A_j=\{w^*\}$, a contradiction. Therefore, $A_i\setminus\{w^*\}$ and $A_j\setminus\{w^*\}$ are incomparable, and we have indeed shown that the new family forms an induced copy of $\mathcal P$. However, this new family lives in $\binom{[w^*-1]}{\leq h^*(\mathcal P)}$, contradicting the definition of $w^*$. 
 \end{proof}
\par We are now ready to prove the stronger upper bound on $w^*(\mathcal P)$.
\begin{proof}[Proof of Lemma \ref{lem:width}] Suppose $\mathcal F=\{A_1, \dots, A_k\}$ forms an optimal cube-height embedding of $\mathcal{P}$ in $Q_{w^*(\mathcal P)}$. By Proposition \ref{prop:property}, $\mathcal{F}\subseteq 2^{[w^*(\mathcal{P})]}$ is a family of sets such that, for every $a \in [w^*(\mathcal{P})]$, there exist two sets $A_i, A_j \in \mathcal{F}$ with $A_j \setminus A_i = \{a\}$. Lemma \ref{lemma:treglown} then implies that $|\mathcal{P}| = |\mathcal{F}| \geq 2 \sqrt{w^*(\mathcal{P}) - 2}$, and rearranging $w^*({\mathcal P})\leq |\mathcal P|^2/4+2$.\end{proof}
\section{Proof of the main result}
\par In this section we prove our main result, Theorem \ref{thm:main}. Given a poset $\mathcal{P}$ and $n$ large enough, we will construct a $\mathcal P$-saturated family in $Q_n$ of size at most $2n^{w^*(\mathcal{P}) - 1}$ which, combined with the bound on the cube-width from the previous section, achieves the claimed result.
\begin{proof}[Proof of Theorem \ref{thm:main}]
\par Let $h^*=h^*(\mathcal P)$, $w^*=w^*(\mathcal P)$, and assume $n\geq2 w^*$. Let $\mathcal F_0$ be the family consisting of the first $h^*$ layers, or in other words, all the elements of size at most $h^*-1$. By the definition of the cube-height, the family $\mathcal F_0$ does not contain an induced copy of $\mathcal P$. We now extend this family to a $\mathcal P$-saturated family in an arbitrary fashion. Let $\mathcal F$ be this resulting family. The crucial property of this family is the following.
\begin{claim}
\label{claim:vc}
The VC-dimension of $\mathcal{F}$ is less than $w^*$.
\end{claim}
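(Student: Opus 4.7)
The plan is to argue by contradiction. Suppose $\mathcal{F}$ shatters some set $M \subseteq [n]$ with $|M| = w^*$; I will produce an induced copy of $\mathcal{P}$ inside $\mathcal{F}$, contradicting its $\mathcal{P}$-freeness. Fix an optimal cube-height embedding $B_1, \dots, B_k$ of $\mathcal{P}$ in $\binom{[w^*]}{\leq h^*}$ (which exists by the definitions of $h^*$ and $w^*$) together with a bijection $\phi: [w^*] \to M$, and write $B_i' = \phi(B_i) \subseteq M$. Then $\{B_1', \dots, B_k'\}$ already realises an induced copy of $\mathcal{P}$ inside $M$ with $|B_i'| \leq h^*$ for every $i$; the only issue is that the $B_i'$ of size exactly $h^*$ need not themselves lie in $\mathcal{F}$.

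For each $i$, I pick a set $A_i \in \mathcal{F}$ satisfying $A_i \cap M = B_i'$ in one of two ways, depending on whether $B_i'$ is ``small'' or ``large''. If $|B_i'| \leq h^* - 1$, then $B_i' \in \mathcal{F}_0 \subseteq \mathcal{F}$, so I set $A_i := B_i'$, which forces $A_i \subseteq M$. If $|B_i'| = h^*$, then the shattering hypothesis provides some $A_i \in \mathcal{F}$ with $A_i \cap M = B_i'$, and I take any such set, with no control on $A_i \setminus M$. Distinctness of $\{A_1, \dots, A_k\}$ follows from distinctness of the $B_i'$.

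The main step, and the place where the only real obstacle could appear, is verifying that $\{A_1, \dots, A_k\}$ induces a copy of $\mathcal{P}$: the worry is that the uncontrolled parts $A_i \setminus M$ for large $B_i'$ might create containments absent among the $B_i'$. The direction $A_i \subseteq A_j \Rightarrow B_i' \subseteq B_j'$ is immediate on intersecting with $M$. For the converse, if $B_i \subsetneq B_j$ then $|B_i| < |B_j| \leq h^*$ forces $|B_i| \leq h^* - 1$, so $i$ falls in the small case and $A_i = B_i' \subseteq M$; consequently $A_i = B_i' \subseteq B_j' = A_j \cap M \subseteq A_j$. The apparent obstacle dissolves because the smaller side of any strict containment automatically has size at most $h^* - 1$ and is therefore chosen as a subset of $M$, so the external parts of the large $A_i$ are never compared against anything below them. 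This yields the desired induced copy of $\mathcal{P}$ in $\mathcal{F}$ and completes the contradiction.
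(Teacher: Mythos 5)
Your proof is correct and follows essentially the same route as the paper's: both use shattering to pull back a copy of $\mathcal{P}$ sitting in $\binom{S}{\leq h^*}$, observe that the sub-$h^*$ elements are already in $\mathcal{F}_0$, lift the size-$h^*$ elements into $\mathcal{F}$ via the shattering hypothesis, and then check the lift is order-preserving by intersecting with the shattered set and using that the strictly smaller set in any containment has size at most $h^*-1$. The only cosmetic difference is that the paper names the maximal-size elements $M_1,\dots,M_s$ and forms $\mathcal{P}'$ explicitly, whereas you define the $A_i$ by a uniform case split.
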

\begin{proof}\par Suppose towards a contradiction that $\mathcal{F}$ shatters a set $S$ of size $w^*$. By definition, this means that $\mathcal L=\{A\cap S: A\in \mathcal F\}$ is the power set of $S$, and it is isomorphic to $Q_{w^*}$. Since $w^*$ is the cube-width of $\mathcal P$, we can find a copy of $\mathcal P$ in $\mathcal L$ such that all sets have size at most $h^*$. For simplicity, we call this copy $\mathcal P$.
\par Let $M_1,\dots, M_s$ be the sets in this copy of $\mathcal P$ that have size exactly $h^*$ -- they are subsets of $S$ by construction. Let $\mathcal P'=\mathcal P\setminus\{M_1,\dots, M_s\}$. Since we have removed all elements of $\mathcal P$ of maximal size, the height of $\mathcal{P}'$ is less than that of $\mathcal{P}$ (i.e. $h^*(\mathcal P') \leq h^*(\mathcal P) - 1$), and $\mathcal{P}'$ is embedded in the first $h^*$ layers. Hence, the subposet $\mathcal P'$ is contained in $\mathcal F_0\subseteq \mathcal F$.
\par Since each $M_i$ is a subset of $S$, we can find $A_i\in\mathcal F$ such that $A_i\cap S=M_i$ for all $i\leq s$. Note that this implies that $|A_i|\geq h^*$ for all $i\leq s$, and so no $A_i$ appears in $\mathcal P'$. We now show that $\mathcal P'\cup\{A_1,\dots,A_s\}$ is an induced copy of $\mathcal P$ in $\mathcal F$, which will yield the desired contradiction.
\par First, if $B\in\mathcal P'$ is incomparable to $M_i$, then $B$ is also incomparable to $A_i$. This is because if $B$ is a subset of $A_i$, then it is also a subset of $A_i\cap S=M_i$, a contradiction. Conversely, if $B\in\mathcal P'$ is a subset of $M_i=A_i\cap S$, then it is a subset of $A_i$, too. We also have that $A_i$ and $A_j$ are incomparable for $i\neq j$ as they are incomparable when restricted to $S$. Finally, $A_i$ can never be a subset of $B\in \mathcal{P}'$, since $A_i\cap S=M_i$ is not a subset of $B\cap S=B$. 
\par We conclude that $\mathcal P'\cup\{A_1,\dots,A_s\}$ is an induced copy of $\mathcal P$ in $\mathcal F$. This gives a contradiction, proving that the VC-dimension of $\mathcal{F}$ is strictly less than $w^*$, as desired.
\end{proof}
\par Combining Lemma \ref{lem:ss} and Claim \ref{claim:vc}, we conclude that, as $n\geq 2w^*$,
$$\text{sat}^*(n,\mathcal P)\leq|\mathcal F|\leq \displaystyle\sum_{i=0}^{w^*-1} \binom{n}{i}\leq w^*\frac{n^{w^*-1}}{(w^*-1)!} \leq 2n^{w^*-1}.$$
\par Here we have used that $\frac{m}{(m-1)!}\leq2$ for all $m \in \mathbb{N}$, and that, since $n\geq 2w^*$, the largest binomial coefficient in the above sum is $\binom{n}{w^*-1}$. Finally, Lemma \ref{lem:width} tells us that $w^* \leq |\mathcal P|^2/4+2$, which proves Theorem \ref{thm:main}.
\end{proof}

\section{Concluding remarks and further work}
\par A first very natural question is: how small can the cube-width be? 
The antichain shows that $w^*(\mathcal{P})$ may be as large as $|\mathcal{P}|$. However, for all the posets we have considered, the cube-width is always at most the size of the poset. We conjecture that this has to be true in general.
\begin{conjecture} For any finite poset $\mathcal P$, $w^*(\mathcal P)\leq |\mathcal P|$.
\label{conj:width}
\end{conjecture}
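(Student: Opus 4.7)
The plan is to prove the conjecture by exhibiting, for any optimal cube-height embedding, an injection from the ground set into the embedded family. Let $\F = \{A_1, \ldots, A_k\}$ be an optimal cube-height embedding of $\P$ in $\binom{[w^*(\P)]}{\leq h^*(\P)}$. By Proposition~\ref{prop:property}, each $a \in [w^*(\P)]$ admits at least one witness pair $(A_i, A_j)$ with $A_i \setminus A_j = \{a\}$. Build a bipartite graph $G$ with parts $[w^*(\P)]$ and $\F$ by connecting $a$ to every $A \in \F$ that appears, as big- or small-side, in some witness for $a$. A matching of $G$ saturating $[w^*(\P)]$ would immediately imply $w^*(\P) \leq |\F| = |\P|$, so by Hall's theorem the conjecture reduces to establishing $|N_G(S)| \geq |S|$ for every $S \subseteq [w^*(\P)]$.

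To establish Hall's condition, I would argue by contradiction, in the spirit of Proposition~\ref{prop:property}. Let $S$ be a minimal violator and set $T = N_G(S)$, so that $|T| < |S|$ and every witness for any $a \in S$ uses only sets from $T$. Intuitively, the sets in $\F \setminus T$ never distinguish coordinates in $S$ by a singleton difference, so they should depend on $S$ only through some uniform pattern, and some pair $a, a' \in S$ should be identifiable without disturbing any comparability or incomparability among the $A_i$. Identifying such a pair produces an induced copy of $\P$ in $\binom{[w^*(\P)-1]}{\leq h^*(\P)}$, contradicting the minimality of $w^*(\P)$. This is a two-coordinate analogue of the single-coordinate collapse used to prove Proposition~\ref{prop:property}.

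The main obstacle is locating the harmless pair inside $S$. Trace-type arguments alone are provably insufficient: set families with the property of Lemma~\ref{lemma:treglown} can have size strictly less than $n$, and this is precisely the barrier that keeps Lemma~\ref{lem:width} at a quadratic bound. The additional leverage must come from the fact that $\F$ is an \emph{induced} copy of $\P$: every incomparability between two sets of $\F$ is certified by at least one ground coordinate, and there are only $\binom{k}{2}$ such certificates to distribute. A careful pigeonhole inside $S$, combined with the layer restriction $|A| \leq h^*(\P)$, should locate two coordinates whose incomparability roles are interchangeable, but making this quantitative is where I expect the crux of the technical difficulty to lie.

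As a sanity check and source of structural intuition, I would first verify the conjecture on controlled classes. Chains, antichains, and the tight family $\P_t$ from the paper all realize the bound with equality, and in each case the proposed big-side matching is in fact a bijection. Posets of height at most $2$ should yield to a direct construction from the bipartite incidences between their two levels, and series-parallel posets to an inductive combination of sub-embeddings. Even a weaker conclusion of the form $w^*(\P) \leq C \cdot |\P|$ for an absolute constant $C$, derivable from the same matching framework via a relaxed Hall-type inequality, would already be a substantial improvement on Lemma~\ref{lem:width}.
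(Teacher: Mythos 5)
This statement is Conjecture~\ref{conj:width}; the paper leaves it open, so there is no proof in the paper to compare against, and a complete proof would be a new result. Your submission is explicitly a plan rather than a proof, and the gap is genuine: the Hall condition $|N_G(S)| \geq |S|$ is asserted but never established, and that step is exactly where the difficulty of the conjecture lives.

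The reduction itself is sound — a matching of $G$ saturating $[w^*(\P)]$ does give $w^*(\P) \leq |\F| = |\P|$ — and the diagnosis that pure trace arguments (Lemma~\ref{lemma:treglown}) are stuck at $\Theta(\sqrt{n})$, so any improvement must exploit the fact that $\F$ is an induced copy of $\P$ in the bottom $h^*(\P)+1$ layers, is the right one. But the contradiction sketch for a minimal Hall violator $S$ does not go through as written. Identifying two coordinates $a, a' \in S$ can damage the embedding in ways that the single-coordinate collapse of Proposition~\ref{prop:property} never encounters: for example, if $A_i \setminus A_j = \{a\}$ and $a' \in A_j$, then merging $a$ and $a'$ turns the incomparable pair $A_i, A_j$ into a comparable one. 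It is true that every pair $(A_i, A_j)$ that can obstruct such a merge must itself be a witness pair for some coordinate of $S$, hence lie inside $T = N_G(S)$ with $|T| < |S|$, which is encouraging; but a single witness pair for $a$ can block the merge of $a$ with \emph{every} coordinate of $A_j \cap S$, so the number of blocked pairs $(a,a')$ is not obviously below $\binom{|S|}{2}$, and the phrase ``should depend on $S$ only through some uniform pattern'' is a hope, not a lemma. The proposal locates the obstacle without overcoming it, and neither the conjecture nor the weaker $w^*(\P) \leq C|\P|$ is proved. The sanity checks (chains, antichains, $\P_t$) confirm tightness but say nothing about the upper bound, and the claims for height-$2$ and series-parallel posets are unverified.
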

\par Since we proved that $\sat(n,\mathcal{P}) = O(n^{w^*(\mathcal{P}) - 1})$, Conjecture \ref{conj:width} would imply that $\text{sat}^*(n, \mathcal P)=O(n^{|\mathcal P| - 1})$.
That upper bound seems the natural threshold for our VC-dimension approach and indeed our construction may yield families of such a size (e.g. for the chain).
\par To conclude the paper, we expand on perhaps one of the most surprising phenomenon we observed in our work. We say that a family $\F \subsetneq Q_n$ \textit{separates} $[n]$ if for every $i \in [n]$ there exist two sets $A$ and $B$ in $\mathcal F$ such that $A\setminus B=\{i\}$. Freschi, Piga, Sharifzadeh and Treglown \cite{freschi2022induced} showed that if the saturation number of a poset $\mathcal{P}$ is unbounded, then any induced $\mathcal P$-saturated family separates $[n]$ (and therefore is of size $\Omega(\sqrt{n})$). On the other hand, in Proposition \ref{prop:property}, we proved that every optimal cube-height embedding separates its ground set. We also note that Keszegh, Lemons, Martin, P{\'a}lv{\"o}lgyi and Patk{\'o}s \cite{keszegh2021induced} arrived at their $\log_2 (n)$ lower bound via a weaker `separability' property of $\mathcal P$-saturated families. This allowed them to build a complete graph on $n$ vertices covered by complete bipartite graphs, each of these corresponding to exactly one set in the family. 
Their lower bound then follows since $\log_2(n)$ is the biclique cover number for the complete graph on $n$ vertices.
\par It seems that poset saturation and separability properties are in some sense deeply interlinked. In view of this, we feel that improvements towards Conjecture \ref{conj:width} may yield ideas for improvements on the general $\sqrt{n}$ lower bound, or vice versa.
\bibliographystyle{amsplain}
\bibliography{document}
\Addresses
\end{document}